\newtheorem{thrm}{Theorem}[section]
\newtheorem{lem}[thrm]{Lemma}
\newtheorem{prop}[thrm]{Proposition}
\newtheorem{cor}[thrm]{Corollary}
\theoremstyle{definition}
\newtheorem{remark}[thrm]{Remark}
\numberwithin{equation}{section}
\author{N. Demni}
\address{Institut de Recherche en Math\'ematiques de Rennes\\ Universit\'e Rennes 1\\
France}
\email{nizar.demni@univ-rennes1.fr}
\title{Free Jacobi process associated with one projection: local inverse of the flow} 
\begin{document}
\maketitle
\begin{abstract}
We pursue the study started in \cite{Dem-Hmi} of the dynamics of the spectral distribution of the free Jacobi process associated with one orthogonal projection. More precisely, we use Lagrange inversion formula in order to compute the Taylor coefficients of the local inverse around $z=0$ of the flow determined in \cite{Dem-Hmi}. When the rank of the projection equals $1/2$, the obtained sequence reduces to the moment sequence of the free unitary Brownian motion. For general ranks in $(0,1)$, we derive a contour integral representation for the first derivative of the Taylor series which is a major step toward the analytic extension of the flow in the open unit disc.
\end{abstract}

\section{Reminder and motivation}
The free Jacobi process $(J_t)_{t \geq 0}$ was introduced in \cite{Demni} as the large-size limit of the Hermitian matrix Jacobi process (\cite{Dou}). It is built as the radial part of the compression of the free unitary Brownian motion $(Y_t)_{t \geq 0}$ (\cite{Biane}) by two orthogonal projections $\{P,Q\}$: 
\begin{equation*}
J_t := PY_tQY_t^{*}P.
\end{equation*}
In this definition, both families of operators $\{P,Q\}$ and $(Y_t)_{t \geq 0}$ are $*$-free (in Voiculescu's sense) in a von Neumann algebra $\mathscr{A}$ endowed with a finite trace $\tau$ and a unit ${\bf 1}$. When $J_t$ is 
considered as a positive operator valued in the compressed algebra $(P\mathscr{A}P, \tau/\tau(P))$, its spectral distribution $\mu_t$\footnote{In this introductory part, we ommit the dependence of our notations on $\{P,Q\}$.} is a probability distribution supported in $[0,1]$ and the positive real number $\tau(P) \mu_t\{1\}$ encodes the general position property for $\{P, Y_tQY_t^{*}\}$ (\cite{Col-Kem}, \cite{Izu-Ued}). On the other hand, the couple of papers \cite{Dem-Ham-Hmi} and \cite{Dem-Hmi} aim to determine the Lebesgue decomposition of $\mu_t$ when both projections coincide $P=Q$. In particular, when $\tau(P) = 1/2$, a complete description was given in \cite{Dem-Ham-Hmi} (see Corollary 3.3 there and \cite{Izu-Ued} for another proof): at any time $t \geq 0$, $\mu_t$ coincides with the spectral distribution of 
\begin{equation*}
\frac{Y_{2t}+Y_{2t}^{*}+2{\bf 1}}{4}  
\end{equation*}
considered as a positive operator in $(\mathscr{A}, \tau)$ (the spectral distribution of $Y_{2t}$, say $\eta_{2t}$, was described in \cite{Biane1}, Proposition 10). For an arbitrary rank $\tau(P) \in (0,1)$, only the discrete part in the Lebesgue decomposition of $\mu_t$ was determined in \cite{Dem-Hmi} (see Theorem 1.1). As to its absolutely-continuous part with respect to Lebesgue measure in $[0,1]$, it was related to that of the spectral distribution, say $\nu_t$, of the unitary operator 
\begin{equation*}
U_t := RY_tRY_t^{*}, \quad R := 2P-{\bf 1}. 
\end{equation*}
Actually, the density of the former distribution is related to the density of the latter through the Caratheodory extension of the Riemann map of the cut plane $\mathbb{C} \setminus [1,\infty[$ (\cite{Dem-Hmi}, Theorem 1.1). 

The key ingredient leading to this partial description is a flow $\psi_{t}$ so far defined and exploited in an interval of the form $(-1,z_t)$ for some $z_t \in (0,1), t > 0$. When $\tau(P) = 1/2$, $\psi_{t}$ is a one-to-one map from a Jordan domain onto the open unit disc $\mathbb{D}$ and its compositional inverse coincides, up to a Cayley transform, with the Herglotz transform of $\eta_{2t}$. For arbitrary ranks, $\psi_t$ is locally invertible and further information on $\nu_t$ (which in turn provide information on $\mu_t$) necessitates the investigation of the analytic extension of of the local inverse of $\psi_t$ in $\mathbb{D}$. Moreover, it was shown in \cite{Dem-Hmi} that $\nu_t$ converges weakly to a probability measure $\nu_{\infty}$ whose support disconnects as soon as $\tau(P) \neq 1/2$ and it would be interesting to know whether this striking disconnectedness happens or not at a finite time. 

In this paper, we use Lagrange inversion formula and derive the Taylor coefficients of the inverse of $\psi_t$, up to the elementary invertible transformations: 
\begin{equation*}
z \in \mathbb{D} \mapsto s = \frac{1+z}{1-z}, \quad s \mapsto \sqrt{\kappa^2 + (1-\kappa^2)s^2},
\end{equation*}
where $\kappa := \tau(R) = 2\tau(P)-1$.  These coefficients are displayed in corollary \ref{Cor32} of proposition \ref{prop31} below and are given by sign-alternating nested (finite) sums involving Laguerre polynomials and others in the variable $\kappa^2$. In particular, we recover when $\kappa = 0$ the moment sequence of $\eta_{2t}$ while more generally, we can single out from the obtained Taylor series a deformation of the Herglotz transform of $\eta_{2t}$ which is still bounded analytic in $\mathbb{D}$ but no longer extends continuously to the unit circle $\mathbb{T}$ unless $\kappa = 0$. As to the analytic extension of the whole Taylor series, it does not seem accessible (at least for the author) directly from the sums alluded to above due to oscillations. For that reason, we derive a contour integral representation over a circle centered at $\kappa$ for the first derivative of the Taylor series, which is so far valid in a neighborhood of $z=0$. To this end, we use the analytic continuation of the generating series for the Jacobi polynomial outside the interval $[-1,1]$ as well as a special generating series for Laguerre polynomials. The latter series generalizes the Herglotz transform of $\eta_{2t}$ and is expressed through it. Compared to the high dissymmetry arising when $\kappa \neq 0$, this integral representation is a major step toward the extension of the flow in the open unit disc which remains a challenging problem. 

The paper is organized as follows. For sake of completeness, we briefly recall in the next section the relation of the spectral dynamics of $(J_t)_{t \geq 0}$ to those of $(U_t)_{t \geq 0}$ and in particular to the flow $(\psi_t)_{t \geq 0}$. In order to make the paper self-contained, the third section includes the various special functions we use in the remainder of the paper, as well as the computations leading to the Taylor coefficients. The fourth section is devoted to the derivation of the aforementioned integral representation and we close the paper by further developments in relation to the extension of the derived integral in $\mathbb{D}$.  

\section{From the free Jacobi process to the flow}
Though the study of the spectral dynamics of 
\begin{equation*}
J_t = PY_tPY_t^{\star}P
\end{equation*}
was direct when $\tau(P)=1/2$, their study for arbitrary ranks $\tau(P) \in (0,1)$ is rather based on the following binomial-type expansion:
\begin{align*}
\tau[(J_t)^n] = \frac{1}{2^{2n+1}}\binom{2n}{n}  + \frac{\kappa}{2} + \frac{1}{2^{2n}}\sum_{k=1}^n \binom{2n}{n-k}\tau((U_t)^k).
\end{align*}
This expansion has the merit to orient our study to the spectral dynamics of $(U_t)$ which turn out to be easier than those of $J_t$ due to the unitarity of $U_t$. In this respect, let
\begin{equation*}
H_{\kappa, t}(z) := \int_{\mathbb{T}} \frac{w+z}{w-z}\nu_{\kappa, t}(dw) = 1+2\sum_{n \geq 1}\tau(U_t^n)z^n
\end{equation*}
be the Herglotz transform of the spectral distribution $\nu_{\kappa,t}$ of $U_t$. Then the key result proved in \cite{Dem-Hmi} states that there exists a flow $(t,z) \mapsto \psi_{\kappa,t}(z)$ defined in an open set of $\mathbb{R_+} \times [-1,1]$ and such that
\begin{align}\label{Dyn}
[H_{\kappa,\infty}(\psi_{\kappa,t}(z))]^2 - [H_{\kappa,\infty}(z)]^2 = [H_{\kappa, t}(\psi_{\kappa, t}(z))]^2 - [H_{\kappa,0}(z)]^2.
\end{align}   
Here
\begin{equation*}
H_{\kappa,0}(z) = H_0(z) = \frac{1+z}{1-z}
\end{equation*}
is the Herglotz transform of $\nu_{\kappa,0} = \delta_1$ and $H_{\kappa,\infty}$ is that of the weak limit $\nu_{\kappa,\infty}$ of $\nu_{\kappa, t}$ (see section 2 in \cite{Dem-Hmi} for more details on $\nu_{\kappa,\infty}$). 
Equation \eqref{Dyn} was so far used to determine the discrete spectrum of $J_t$ and $\psi_{\kappa, t}$ was expressed as follows: define\footnote{The principal branch of the square root is taken.} 
\begin{equation*}
\alpha: z \mapsto \frac{1-\sqrt{1-z}}{1+\sqrt{1-z}} =  \frac{z}{(1+\sqrt{1-z})^2}.
\end{equation*}
This map extends analytically to $z \in \mathbb{C} \setminus [1,\infty[$ and is one-to-one from this cut plane onto $\mathbb{D}$ whose inverse is 
\begin{equation*}
\alpha^{-1}(z) = \frac{4z}{(1+z)^2}.
\end{equation*}
Recall also from \cite{Biane1} (p.266-269) that the map
\begin{equation*}
\xi_{2t}: z \mapsto \frac{z-1}{z+1}e^{tz}
\end{equation*}
is invertible in some Jordan domain onto $\mathbb{D}$ and that its compositional inverse is the Herglotz transform of $\eta_{2t}$:
\begin{equation*}
K_{{2t}}(z) :=  \int_{\mathbb{T}} \frac{w+z}{w-z}\eta_{2t}(dw) = 1+2\sum_{n \geq 1}\tau(Y_t^n)z^n.
\end{equation*}
If 
\begin{equation*}
s := \frac{1+z}{1-z}, \, z \in \mathbb{D}, \quad a(s) := \sqrt{\kappa^2 + (1-\kappa^2)s^2},
\end{equation*}
then (\cite{Dem-Hmi}, p.283)
\begin{equation*}
\psi_{\kappa,t}(z) = \alpha\left(\frac{a^2}{a^2-\kappa^2}\alpha^{-1}[\xi_{2t}(a(y))]\right).
\end{equation*}
This is a locally invertible map near $z=0$ so that \eqref{Dyn} is equivalent to 
\begin{align*}
[H_{\kappa, t}(z)]^2 - [H_{\kappa,\infty}(z)]^2 = [H_{\kappa,\infty}(\psi_{\kappa,t}^{-1}(z))]^2 + [H_{\kappa,0}(\psi_{\kappa,t}^{-1}(z))]^2
\end{align*}   
near $z=0$. Since $z \mapsto s, s \mapsto a(s)$ and $\alpha$ are invertible transformations the inverting $\psi_{\kappa,t}$ around $z=0$ amounts to the inversion of the map
\begin{equation*}
a \mapsto \frac{a^2}{a^2-\kappa^2}\alpha^{-1}[\xi_{2t}(a)]
\end{equation*}
near $a=1$. This is the main task we achieve in the next section.  

\section{Local inverse of the flow: Lagrange inversion formula}
\subsection{Special functions}
As claimed in the introduction, we list below the special functions occurring in our subsequent computations (see \cite{Erd}, \cite{Man-Sri}, \cite{Rainville} for further details). We start with the Gamma function 
\begin{equation*}
\Gamma(x) = \int_0^{\infty} e^{-u}u^{x-1} du, \quad x > 0, 
\end{equation*}
and the Pochhammer symbol 
\begin{equation*}
(a)_k = (a+k-1)\dots(a+1)a, \quad a \in \mathbb{R}, \, k \in \mathbb{N}, 
\end{equation*}
with the convention $(0)_k = \delta_{k0}$. The latter may be expressed as  
\begin{equation*}
(a)_k = \frac{\Gamma(a+k)}{\Gamma(a)}
\end{equation*} 
when $a > 0$, while
\begin{equation}\label{I1}
\frac{(-n)_k}{k!} = (-1)^k \binom{n}{k}
\end{equation}
if $k \leq n$ and vanishes otherwise. 
Next comes the generalized hypergeometric function defined by the series 
\begin{equation*}
{}_pF_q((a_i, 1 \leq i \leq p), (b_j, 1 \leq j \leq q); z) = \sum_{m \geq 0}\frac{\prod_{i=1}^p(a_i)_m}{\prod_{j=1}^q(b_j)_m}\frac{z^m}{m!}
\end{equation*}
where an empty product equals one and the parameters $(a_i, 1 \leq i \leq p)$ are reals while $(b_j, 1 \leq j \leq q) \in \mathbb{R} \setminus -\mathbb{N}$. 
With regard to \eqref{I1}, this series terminates when at least $a_i = -n \in - \mathbb{N}$ for some $1 \leq i \leq p$, therefore reduces in this case to a polynomial of degree $n$. In particular, the Charlier polynomials are defined by 
\begin{equation*}
C_n(x,a) := {}_2F_0\left(-n, -x; -\frac{1}{a}\right), \quad a \in \mathbb{R} \setminus \{0\}, x \in \mathbb{R}.   
\end{equation*}
When $x \in \mathbb{Z}$ is an integer, a generating function of these polynomials is given by
\begin{equation}\label{GFC}
\sum_{n \geq 0}C_n(x,a)\frac{(au)^n}{n!} = e^{au}\left(1-u\right)^x, \quad |u| < 1. 
\end{equation}
Moreover, the $n$-th Laguerre polynomial with index $\alpha \in \mathbb{R}$ defined by 
\begin{equation}\label{DefL}
L_n^{(\alpha)}(z) := \frac{1}{n!}\sum_{j=0}^n\frac{(-n)_j}{j!}(\alpha+j+1)_{n-j}z^j, 
\end{equation}
is related to the $n$-th Charlier polynomial via:
\begin{equation}\label{CL}
\frac{(-a)^n}{n!}C_n(x,a) = L_n^{(x-n)}(a). 
\end{equation}
When $p=2, q=1$, the Jacobi polynomial $P_n^{a,b}$ of parameters $a,b > -1$ is represented as 
\begin{equation}\label{Jacobi}
P_n^{a,b}(x) := \frac{(a+1)_n}{n!}{}_2F_1\left(-n, n+a+b+1, a+1, \frac{1-x}{2}\right).
\end{equation}

\subsection{Inversion}
Let $t > 0, \kappa \in (-1,1)$ be fixed. The aim of this section is to derive the Taylor coefficients of the inverse of the map $\phi_{\kappa,t}$ defined by 
\begin{equation*}
\phi_{\kappa,t}(z) = \frac{z^2}{z^2-\kappa^2} \alpha^{-1}(\xi_{2t}(z))
\end{equation*}
in a neighborhood of $z=1$. Of course, it is readily checked that $\partial_z\phi_{t, \kappa}(1) \neq 0$ so that $\phi_{t,\kappa}$ is locally invertible there. According to Lagrange inversion formula (see \cite{Man-Sri}, p.354), the Taylor coefficients of its inverse are given by 
\begin{equation*}
a_n(\kappa, t) := \frac{1}{n!}\partial_z^{n-1}\left[\frac{z-1}{\phi_{\kappa,t}(z)}\right]^n|_{z=1}, \quad n \geq 1. 
\end{equation*}
The issue of our computations is recorded in the proposition below:
\begin{prop}\label{prop31}
There exists a set of polynomials $(P_n^{(m)})_{n \geq 1}$ depending on an integer parameter $m \geq 0$ such that 
\begin{align*}
a_n(\kappa, t) = \frac{2}{2^{2n}n}\sum_{k=1}^{n} \binom{2n}{n-k}e^{-kt} \left\{\sum_{m= 0}^{k-1}L_{k-m-1}^{(m+1)}(2kt)\,2^{m}P_n^{(m)}(\kappa^2)\right\}.
\end{align*}
\end{prop}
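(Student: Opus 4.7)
The plan is to apply the Lagrange inversion formula given just above the statement, namely
$$a_n(\kappa,t) \;=\; \frac{1}{n!}\partial_z^{n-1}\!\left[\frac{z-1}{\phi_{\kappa,t}(z)}\right]^{n}\!\bigg|_{z=1} \;=\; \frac{1}{n}\,[w^{n-1}]\,F(1+w)^{n},$$
where $F(z):=(z-1)/\phi_{\kappa,t}(z)$ and $w:=z-1$. Using $\alpha^{-1}(\zeta)=4\zeta/(1+\zeta)^2$ together with the key factorisation $(z-1)/\xi_{2t}(z)=(z+1)e^{-tz}$, a direct computation gives
$$F(z) \;=\; \frac{(z^{2}-\kappa^{2})(z+1)(1+\xi_{2t}(z))^{2}}{4z^{2}e^{tz}},\qquad F(z)^{n} \;=\; \frac{(z-1)^{n}(z^{2}-\kappa^{2})^{n}}{4^{n}z^{2n}}\cdot\frac{(1+\xi_{2t}(z))^{2n}}{\xi_{2t}(z)^{n}},$$
the second display following from the identity $(z+1)^{n}/e^{ntz}=(z-1)^{n}/\xi_{2t}(z)^{n}$ (which keeps everything analytic at $z=1$ in spite of the formal pole at $\xi_{2t}=0$).

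I would then expand $(1+\xi_{2t})^{2n}/\xi_{2t}^{n}=\sum_{j=-n}^{n}\binom{2n}{n+j}\xi_{2t}^{j}$ by the binomial theorem and substitute $\xi_{2t}(z)^{j}=((z-1)/(z+1))^{j}e^{jtz}$. Each summand carries a factor $(z-1)^{n+j}$, and since we must extract the coefficient of $w^{n-1}$, only the indices $j=-k$ with $1\le k\le n$ contribute. This collapses the coefficient to
$$[w^{n-1}]F(1+w)^{n} \;=\; \frac{1}{4^{n}}\sum_{k=1}^{n}\binom{2n}{n-k}[w^{k-1}]\bigl\{A_{n}(w)\,B_{k}(w)\bigr\},$$
where $A_{n}(w):=\bigl(1-\kappa^{2}/(1+w)^{2}\bigr)^{n}$ and $B_{k}(w):=(2+w)^{k}e^{-kt(1+w)}=2^{k}e^{-kt}(1+w/2)^{k}e^{-ktw}$. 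A binomial expansion of $A_{n}$ in $\kappa^{2}$ then manifestly defines polynomials $P_{n}^{(m)}(\kappa^{2}):=[w^{m}]A_{n}(w)$, of degree at most $n$ in $\kappa^{2}$, which will be the polynomials appearing in the proposition.

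The decisive step is to recognise that $(1+w/2)^{k}e^{-ktw}$ is precisely the right-hand side of the Charlier generating function \eqref{GFC} under the specialisation $x=k$, $a=2kt$, $u=-w/2$; combined with the Charlier–Laguerre conversion \eqref{CL}, this yields
$$(1+w/2)^{k}e^{-ktw} \;=\; \sum_{m\ge 0}\frac{L_{m}^{(k-m)}(2kt)}{2^{m}}\,w^{m}.$$
Forming the Cauchy product with $A_{n}$ and reading off the $w^{k-1}$-coefficient via the pairing $m\leftrightarrow k-1-m$ (so that the Laguerre upper parameter becomes $k-(k-1-m)=m+1$) produces
$$[w^{k-1}]\{A_{n}B_{k}\} \;=\; 2\,e^{-kt}\sum_{m=0}^{k-1}2^{m}P_{n}^{(m)}(\kappa^{2})\,L_{k-m-1}^{(m+1)}(2kt),$$
after which the normalisation $a_{n}=[w^{n-1}]F(1+w)^{n}/n$ delivers the stated identity. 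The main obstacle is not analytic but bookkeeping: one must simultaneously keep track of the $(z-1)$-orders generated by the binomial expansion of $(1+\xi_{2t})^{2n}/\xi_{2t}^{n}$, spot the non-obvious Charlier specialisation $u=-w/2$, $a=2kt$ hidden inside $B_{k}$, and then perform the Cauchy-product reindexing that converts the (possibly negative-index) Laguerre polynomials $L_{m}^{(k-m)}$ into the positive-index form $L_{k-m-1}^{(m+1)}$ demanded by the statement.
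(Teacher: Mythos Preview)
Your proof is correct and uses the same ingredients as the paper: the binomial expansion of $(1+\xi_{2t})^{2n}/\xi_{2t}^{n}$, the Charlier generating function \eqref{GFC} combined with the Charlier--Laguerre relation \eqref{CL}, and the definition of $P_n^{(m)}$ as the Taylor coefficients of $(1-\kappa^{2}/z^{2})^{n}$ at $z=1$. Your organisation is in fact a bit more economical than the paper's: you observe immediately that the summand indexed by $j$ carries a factor $w^{n+j}$, so only $j=-k$ with $1\le k\le n$ can contribute to $[w^{n-1}]$, and this lets you pair the Laguerre expansion of $B_k$ directly with the $P_n^{(m)}$'s; the paper instead keeps all $j\in[-n,n]$, expands everything, and arrives at the same restriction only after several index shifts $k\to k-n$, $m\to m-k$, $k\to n-k$, $m\to n-1-m$.
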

\begin{proof} 
Set $\epsilon:= \kappa^2$, then we need to expand 
\begin{equation}\label{Lagr}
(z-1)^n \left(1-\frac{\epsilon}{z^2}\right)^n \frac{(1+\xi_t(z))^{2n}}{4^n\xi_t^n(z)}
\end{equation}
around $z=1$. To proceed, we start with 
\begin{align}
\left(1-\frac{\epsilon}{z^2}\right)^n &= \sum_{k=0}^n\binom{n}{k}(-\epsilon)^{k}\frac{1}{(1+(z-1))^{2k}} \nonumber
\\& = \sum_{k=0}^n\binom{n}{k}(-\epsilon)^{k}\sum_{m \geq 0}\frac{(2k)_m}{m!}(1-z)^m \nonumber
\\& :=  \sum_{m \geq 0}(z-1)^mP_n^{(m)}(\epsilon) \label{Poly}
\end{align}
where we set 
\begin{equation*}
P_n^{(m)}(\epsilon) := \frac{(-1)^m}{m!}\sum_{k=0}^n\binom{n}{k}(-\epsilon)^k(2k)_m. 
\end{equation*}

Next, we expand
\begin{align*}
(z-1)^n\frac{(1+\xi_t(z))^{2n}}{\xi_t^n(z)} &= (z-1)^n\sum_{k = -n}^n\binom{2n}{n+k}\xi_t^k
\\& = \binom{2n}{n}(z-1)^n + \sum_{k = 1}^n\binom{2n}{n+k}\left\{\frac{(z-1)^{n+k}}{(1+z)^k}e^{ktz} + (z-1)^{n-k}(1+z)^ke^{-ktz}\right\}
\\& = \binom{2n}{n}(z-1)^n + \sum_{k \in [-n..n]\setminus \{0\}}\binom{2n}{n+k}\frac{(z-1)^{n+k}}{(1+z)^k}e^{ktz} 
\end{align*}
and use \eqref{GFC} together with \eqref{CL} to derive
\begin{align*}
\frac{(z-1)^{n+k}}{(1+z)^k}e^{ktz} & = \frac{e^{kt}}{2^k} (z-1)^{n+k}\sum_{m \geq 0}C_m(-k,-2kt)\frac{(kt(z-1))^m}{m!}
\\& = e^{kt} \sum_{m \geq 0}L_{m}^{(-k-m)}(-2kt)\frac{(z-1)^{m+n+k}}{2^{m+k}}.
\end{align*}
As a result
\begin{align*}
(z-1)^n\frac{(1+\xi_t(z))^{2n}}{\xi_t^n(z)} &= \binom{2n}{n}(z-1)^n + \sum_{k \in [-n..n]\setminus \{0\}} \binom{2n}{n+k}e^{kt} \sum_{m \geq 0}L_{m}^{(-k-m)}(-2kt)\frac{(z-1)^{m+n+k}}{2^{m+k}}.
\end{align*}
From \eqref{I1} and \eqref{DefL}, it is clear that $L_m^{(-m)}(0) = 0$ for all $m \geq 1$ while $L_0^{(0)}(z) = 1$. Hence
\begin{align*}
\frac{(z-1)^n}{[\alpha^{-1}(\xi_t(z))]^n} &= (z-1)^n\frac{(1+\xi_t(z))^{2n}}{4^n\xi_t^n(z)} = \sum_{k = -n}^n \binom{2n}{n+k}e^{kt} \sum_{m \geq 0}L_{m}^{(-k-m)}(-2kt)\frac{(z-1)^{m+n+k}}{2^{2n+m+k}}
\\& =  \sum_{k = 0}^{2n} \binom{2n}{k}e^{(k-n)t}\sum_{m \geq 0} L_{m}^{(n-k-m)}(2(n-k)t)\frac{(z-1)^{m+k}}{2^{n+m+k}}
\\& = \sum_{k = 0}^{2n} \binom{2n}{k}e^{(k-n)t}\sum_{m \geq k} L_{m-k}^{(n-m)}(2(n-k)t)\frac{(z-1)^{m}}{2^{n+m}}
\\ & = \sum_{m \geq 0} \left\{\sum_{k = 0}^{m \wedge 2n} \binom{2n}{k}e^{(k-n)t}L_{m-k}^{(n-m)}(2(n-k)t)\right\}\frac{(z-1)^{m}}{2^{n+m}}.
\end{align*}
Keeping in mind \eqref{Lagr} and \eqref{Poly}, we end up with
\begin{align*}
a_n(\kappa, t) & = \frac{1}{n}\sum_{m=0}^{n-1}\frac{1}{2^{n+m}}P_n^{(n-1-m)}(\epsilon)\left\{\sum_{k = 0}^{2n \wedge m} \binom{2n}{k}e^{(k-n)t} L_{m-k}^{(n-m)}(2(n-k)t)\right\} 
\\& = \frac{1}{n}\sum_{m=0}^{n-1}\frac{1}{2^{n+m}}P_n^{(n-1-m)}(\epsilon)\left\{\sum_{k = 0}^{m} \binom{2n}{k}e^{(k-n)t} L_{m-k}^{(n-m)}(2(n-k)t)\right\} 
\\& = \frac{2}{2^{2n}n}\sum_{k=0}^{n-1} \binom{2n}{k}e^{(k-n)t} \left\{\sum_{m= k}^{n-1}L_{m-k}^{(n-m)}(2(n-k)t)\,2^{n-1-m}P_n^{(n-1-m)}(\epsilon)\right\}
\\& = \frac{2}{2^{2n}n}\sum_{k=1}^{n} \binom{2n}{n-k}e^{-kt} \left\{\sum_{m= n-k}^{n-1}L_{m-n+k}^{(n-m)}(2kt)\,2^{n-1-m}P_n^{(n-1-m)}(\epsilon)\right\} 
\\& = \frac{2}{2^{2n}n}\sum_{k=1}^{n} \binom{2n}{n-k}e^{-kt} \left\{\sum_{m= 0}^{k-1}L_{k-m-1}^{(m+1)}(2kt)\,2^{m}P_n^{(m)}(\epsilon)\right\}.
\end{align*}
\end{proof}
 
\begin{cor}\label{Cor32}
Set $\Phi_{\kappa, t} := \alpha \circ \phi_{\kappa,t}$. Then $\Phi_{\kappa, t}$ is invertible in a neighborhood of $z=1$ and its inverse is given near the origin by
\begin{equation*}
\Phi_{\kappa,t}^{-1}(z) = 1+ \sum_{n \geq 1}\frac{(-1)^n}{n}\left\{\sum_{k=1}^n(-1)^{k} \frac{2n}{n+k}\binom{n+k}{n-k} b_k(\kappa,t)\right\} z^n,
\end{equation*}
where $b_k(\kappa,t) := k 2^{2k}a_k(\kappa,t), k \geq 1$. 
\end{cor}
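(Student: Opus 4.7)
The plan is to invert $\Phi_{\kappa,t}=\alpha\circ\phi_{\kappa,t}$ by composing the two inverses and then expanding the result as a power series in $z$. First I would verify that $\Phi_{\kappa,t}(1)=0$: since $\xi_{2t}(1)=0$ and $\alpha^{-1}(0)=0$, one gets $\phi_{\kappa,t}(1)=0$, and then $\alpha(0)=0$ yields $\Phi_{\kappa,t}(1)=0$. Proposition~\ref{prop31} already provides $\partial_z\phi_{\kappa,t}(1)\neq 0$, while $\alpha$ is biholomorphic at $0$ (with $\alpha'(0)=1/4$), so the inverse function theorem gives local invertibility of $\Phi_{\kappa,t}$ near $z=1$ together with
\[
\Phi_{\kappa,t}^{-1}(z) = \phi_{\kappa,t}^{-1}\bigl(\alpha^{-1}(z)\bigr), \qquad \phi_{\kappa,t}^{-1}(w) = 1+\sum_{n\geq 1}a_n(\kappa,t)\,w^n,
\]
the second expansion being the Lagrange series furnished by Proposition~\ref{prop31}.

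Next I would substitute $w=\alpha^{-1}(z)=4z/(1+z)^2$ and expand each power via the negative binomial series,
\[
\bigl[\alpha^{-1}(z)\bigr]^n = \frac{(4z)^n}{(1+z)^{2n}} = \sum_{j\geq 0}(-1)^j\,4^n\binom{2n+j-1}{j}\,z^{n+j}.
\]
Setting $m:=n+j$, interchanging the two summations and collecting equal powers of $z$ yields
\[
\Phi_{\kappa,t}^{-1}(z) = 1+\sum_{m\geq 1}\left\{\sum_{n=1}^{m}(-1)^{m-n}4^n\binom{m+n-1}{m-n}a_n(\kappa,t)\right\}z^m.
\]

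It remains to cast the inner coefficient into the announced form. Using the definition $b_n(\kappa,t)=n\,2^{2n}a_n(\kappa,t)$, I would replace $4^n a_n$ by $b_n/n$ and split the sign as $(-1)^{m-n}=(-1)^m(-1)^n$. The corollary then reduces to the elementary binomial identity
\[
\frac{1}{n}\binom{m+n-1}{m-n} = \frac{2}{m+n}\binom{m+n}{m-n},
\]
both sides of which equal $2(m+n-1)!/\bigl[(m-n)!(2n)!\bigr]$. After the relabelling $(m,n)\leftrightarrow(n,k)$ this produces exactly the displayed formula. I do not anticipate any substantive obstacle: both $\phi_{\kappa,t}^{-1}$ and $\alpha^{-1}$ are analytic at their respective base points so the composition is automatically analytic near $z=0$ and may be expanded termwise; the only non-trivial work is the bookkeeping of the double sum together with the routine binomial identity above.
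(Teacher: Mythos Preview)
Your argument is correct. The substitution $w=\alpha^{-1}(z)$ into the Lagrange series, the negative-binomial expansion of $(1+z)^{-2n}$, and the identity
\[
\frac{1}{n}\binom{m+n-1}{m-n}=\frac{2}{m+n}\binom{m+n}{m-n}
\]
together give precisely the stated coefficients; the termwise manipulations are justified by analyticity at $z=0$.

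The paper reaches the same formula by a slightly different route. Instead of expanding $(1+z)^{-2n}$ directly, it first differentiates
\[
\Phi_{\kappa,t}^{-1}(z)=1+\sum_{n\ge 1}\frac{b_n(\kappa,t)}{n\,2^{2n}}[\alpha^{-1}(z)]^n
\]
using $\partial_z\alpha^{-1}(z)=\dfrac{1-z}{1+z}\,\dfrac{\alpha^{-1}(z)}{z}$, and then invokes a known generating-function identity (from Manocha--Srivastava) together with the associated inverse relation from Riordan's tables,
\[
c_n=\sum_{k=0}^n(-1)^{k+n}\frac{2n}{n+k}\binom{n+k}{n-k}b_k,
\]
to pass from the series in $\alpha^{-1}(z)$ to one in $z$. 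Integrating back yields the corollary. Your approach is more self-contained: you rederive the needed inverse relation by hand via the single binomial identity above, avoiding the detour through differentiation and the external references. The paper's approach, on the other hand, makes explicit the link to the classical Chebyshev-type inverse pair \eqref{Conv}--\eqref{InvRel}, which it reuses in the next section when rewriting $S_n(\kappa,t)$.
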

\begin{proof}
Since $\alpha$ is invertible in $\mathbb{C} \setminus [1,\infty[$ with $\alpha(0) = 0$, then 
\begin{equation*}
\Phi_{\kappa,t}^{-1}(z) = 1+\sum_{n \geq 1}b_n(\kappa,t)\frac{[\alpha^{-1}(z)]^n}{n2^{2n}} 
\end{equation*}
near $z=0$. Differentiating term-wise with respect to $z$ and using the identity 
\begin{equation*}
\partial_z\alpha^{-1}(z) = \frac{1-z}{1+z}\frac{\alpha^{-1}(z)}{z},
\end{equation*}
we get
\begin{equation*}
\partial_z\Phi_{\kappa,t}^{-1}(z) = \frac{1-z}{z(1+z)}\sum_{n \geq 1}b_n(\kappa,t)\frac{[\alpha^{-1}(z)]^n}{2^{2n}}.
\end{equation*}
Now recall the following fact (see \cite{Man-Sri}, p.357): if $(c_n)_{n \geq 0}, (b_n)_{n \geq 0}$ are two sequences of real numbers related by 
\begin{equation}\label{Conv}
b_n = \sum_{k=0}^n\binom{2n}{n-k}c_k,
\end{equation}
then 
\begin{equation*}
\sum_{n \geq 0}b_n\frac{z^n}{4^n} = \frac{1+\alpha(z)}{1-\alpha(z)} \sum_{n \geq 0}c_n[\alpha(z)]^n 
\end{equation*}
whenever both series converge. Equivalently, the relation \eqref{Conv} is invertible with inverse given by (see \cite{Rio}, p.68, Table 2.5, (2)), 
\begin{align}\label{InvRel}
c_0 = b_0, \quad c_n &= \sum_{k=0}^n(-1)^{k+n}\left[\binom{n+k}{n-k} + \binom{n+k-1}{n-k-1}\right] b_k = \sum_{k=0}^n(-1)^{k+n} \frac{2n}{n+k}\binom{n+k}{n-k} b_k, \, n \geq 1,
\end{align}
and 
\begin{equation*}
\sum_{n \geq 0}b_n\frac{[\alpha^{-1}(z)]^n}{4^n} =  \frac{1+ z}{1- z} \sum_{n \geq 0}c_n z^n.
\end{equation*}
The corollary then follows from the substitutions 
\begin{equation*}
b_0 = 0, \quad b_n = b_n(\kappa,t), \, n\geq 1. 
\end{equation*} 
\end{proof}

\begin{remark}
If $\kappa = 0$ then $P_n^{(m)}(0) = \delta_{m0}$ so that 
\begin{equation*}
b_n(0, t) = 2\sum_{k=1}^{n} \binom{2n}{n-k}e^{-kt}L_{k-1}^{(1)}(2kt). 
\end{equation*}
In this case, the inverse of $\Phi_{0,t}$ reduces to
\begin{equation*}
\Phi_{0,t}^{-1}(z) = 1+\sum_{n \geq 1}b_n(0,t)\frac{[\alpha^{-1}(z)]^n}{n2^{2n}} = 1+2\sum_{n \geq 1} \frac{1}{n}e^{-nt}L_{n-1}^{(1)}(2nt)z^{n} 
\end{equation*}
which is nothing else but the Herglotz transform $K_{2t}$ of $\eta_{2t}$ (\cite{Biane1}, p.269).
\end{remark}

\begin{remark}
For general $\epsilon \in [0,1), P_n^{(0)} = (1-\epsilon)^n$ so that the term corresponding to $m=0$ in $a_n(\kappa, t)$ is
\begin{equation*}
 2(1-\epsilon)^n \sum_{k=1}^n \binom{2n}{n-k} e^{-kt}L_{k-1}^{(1)}(2kt) = (1-\epsilon)^nb_n(0, t).
\end{equation*}
Multiplying by $[\alpha^{-1}(z)]^n$ and summing over $n \geq 1$, the preceding remark shows that  
\begin{equation*}
1+2\sum_{n \geq 1}\frac{[(1-\epsilon)\alpha^{-1}(z)]^n}{n 2^{2n}}b_n(0,t)= K_{2t}\left(\alpha[(1-\epsilon)\alpha^{-1}(z)]\right).
\end{equation*}
Since $K_{2t}$ is analytic in $\mathbb{D}$ and since $\alpha$ maps $\mathbb{C} \setminus [1,\infty[$ onto $\mathbb{D}$, then the map 
\begin{equation*}
V_{\kappa, 2t}: z \mapsto K_{2t}(\alpha[(1-\epsilon)\alpha^{-1}(z)])
\end{equation*}
extends analytically to $\mathbb{D}$ with values in the right half-plane $\{\Re(z) > 0\}$. It follows that there exists a probability distribution $\eta_{\kappa,2t}$ on $\mathbb{T}$ such that 
\begin{equation*}
V_{\kappa, 2t} = \int_{\mathbb{T}}\frac{w+z}{w-z}\eta_{\kappa,2t}(dw),
\end{equation*}
and $\eta_{0,2t} = \eta_{2t}$. However, unless $\epsilon = 0$, $V_{\kappa, 2t}$ is no longer continuous in the closed unit disc since the map
\begin{equation*}
z \mapsto \alpha[(1-\epsilon)\alpha^{-1}(z)]
\end{equation*}
is not so in $\mathbb{C}$ due to the presence of the square root function in the definition of $\alpha$. Nonetheless, it is still a bounded holomorphic function in $\mathbb{D}$ since $K_{2t}$ is continuous in $\overline{\mathbb{D}}$ (\cite{Biane1}, Lemma 12).
\end{remark}

\section{An Integral representation}
For any $n \geq 1$, set
\begin{equation}\label{FS}
S_n(\kappa,t) := \sum_{k=1}^n(-1)^{k+n} \frac{2n}{n+k}\binom{n+k}{n-k} b_k(\kappa,t),
\end{equation}
where we recall
\begin{equation*}
b_k(\kappa, t) = 2\sum_{j=1}^{k} \binom{2k}{k-j}e^{-jt} \left\{\sum_{m= 1}^{j-1}L_{j-m-1}^{(m+1)}(2jt)\,2^{m}P_k^{(m)}(\epsilon)\right\}, \quad k \geq 1.
\end{equation*}
For small $|z|$, consider the Taylor series 
\begin{equation*}
M_{\kappa,t}(z) := \sum_{n \geq 2} S_n(\kappa,t) z^n = z\partial_z \Phi_{\kappa,t}^{-1}(z).
\end{equation*}
A major part of this section is devoted to the derivation of the following integral representation:
\begin{prop}
There exists a circle $\gamma_{\kappa}$ centered at $w=\kappa$ and a neighborhood of $z=0$ where 
\begin{align*}
M_{\kappa,t}(z) &= (1-z) \frac{\kappa}{2i\pi}\int_{\gamma_{\kappa}} \frac{[(K_{2t}(y))^2-1]}{[t(K_{2t}(y))^2 + (2-t)][wK_{2t}(y) - \kappa]}\frac{dw}{wR(z,w)}.
\end{align*}
Here 
\begin{equation*}
R(z,w) := \sqrt{(1-z)^2 + 4w^2z},
\end{equation*}
and 
\begin{equation*}
y = y(z,w) := \frac{4z(1-w^2)}{(1+z+R(z,w))^2}. 
\end{equation*}
\end{prop}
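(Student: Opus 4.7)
My plan is to start from the generating-series identity
\[
M_{\kappa,t}(z)=\frac{1-z}{1+z}\,G(\tilde y),\qquad G(v):=\sum_{k\geq 1}b_k(\kappa,t)\,v^k,\qquad \tilde y:=\frac{z}{(1+z)^{2}},
\]
which is recorded in the proof of Corollary \ref{Cor32}, and to rewrite $G(\tilde y)$ as a contour integral around $w=\kappa$. By Lagrange inversion applied to $\phi_{\kappa,t}$, which vanishes simply at $s=1$,
\[
b_k(\kappa,t)=\frac{1}{2i\pi}\oint_{C_1}\!\left[\frac{(s^{2}-\kappa^{2})(1+\xi_{2t}(s))^{2}}{s^{2}\,\xi_{2t}(s)}\right]^{k}ds,
\]
where $C_1$ is a small positively oriented loop around $s=1$. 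Summing the geometric series in $\tilde y$, which converges on $C_1$ for $z$ small enough, collapses $G(\tilde y)$ to a single contour integral.

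The first change of variable is $s=K_{2t}(\zeta)$, which turns $C_1$ into a small loop $C_0$ around $\zeta=0$. Differentiating the relation $\xi_{2t}(K_{2t}(\zeta))=\zeta$ together with the direct identity $\xi_{2t}'(s)/\xi_{2t}(s)=[ts^{2}+(2-t)]/(s^{2}-1)$ gives
\[
K_{2t}'(\zeta)=\frac{K_{2t}^{2}-1}{\zeta\,[tK_{2t}^{2}+(2-t)]}.
\]
Substituting $\tilde y=z/(1+z)^{2}$ and exploiting the algebraic identity $\zeta-\tilde y(1+\zeta)^{2}=(\zeta-z)(1-z\zeta)/(1+z)^{2}$, I set $w^{2}:=(z-\zeta)(1-z\zeta)/[z(1+\zeta)^{2}]$ and factor the denominator of the summed integrand as $-z(1+\zeta)^{2}(K_{2t}w-\kappa)(K_{2t}w+\kappa)/(1+z)^{2}$. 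After cancellation against the numerator, $G(\tilde y)$ collapses to the compact form
\[
G(\tilde y)=-\frac{1}{2i\pi}\oint_{C_0}\frac{(K_{2t}^{2}-\kappa^{2})\,K_{2t}'(\zeta)}{K_{2t}^{2}w^{2}-\kappa^{2}}\,d\zeta,\qquad K_{2t}=K_{2t}(\zeta).
\]

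The second change of variable inverts the quadratic defining $w^{2}$ to $\zeta=y(z,w)=(1+z-R)/(1+z+R)$. The identities $1-y=2R/(1+z+R)$ and $y(1+z+R)^{2}=4z(1-w^{2})$, both direct consequences of $R^{2}=(1-z)^{2}+4w^{2}z$, produce the differential relation
\[
K_{2t}'(\zeta)\,d\zeta=-\frac{2w(1+z)(K_{2t}^{2}-1)}{R(z,w)(1-w^{2})[tK_{2t}^{2}+(2-t)]}\,dw,\qquad K_{2t}=K_{2t}(y(z,w)).
\]
Since the map $w\mapsto\zeta$ is two-to-one through $w^{2}$, the loop $C_0$ lifts to a single $w$-loop enclosing $w=1$ together with the unique root $w_\ast$ of $w\,K_{2t}(y(z,w))=\kappa$ close to $\kappa$ for $z$ near $0$; the residue at $w=1$ vanishes because $K_{2t}^{2}-1$ has a simple zero there cancelling the simple pole of $1/(1-w^{2})$, so the contour may be deformed to $\gamma_\kappa$. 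A final simplification at $w_\ast$, using $K_{2t}^{2}-\kappa^{2}=\kappa^{2}(1-w_\ast^{2})/w_\ast^{2}$ and $K_{2t}w_\ast+\kappa=2\kappa$, shows that the $w$-integrand differs from the one in the proposition by the factor $(1+z)\kappa$; combined with the prefactor $(1-z)/(1+z)$ this yields the announced $(1-z)\kappa$. The main technical difficulty is the bookkeeping in this last change of variable, where the $\zeta$-loop must be identified with a single $w$-loop (rather than the full two-sheeted preimage) and the orientation tracked through the sign of $\partial_w y$; once that is settled, the matching is a direct algebraic computation.
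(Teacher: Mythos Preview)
Your approach is genuinely different from the paper's and, at the level of ideas, correct. The paper never returns to the Lagrange contour integral: it starts from the closed formula for $b_k(\kappa,t)$ obtained in Proposition~\ref{prop31} (a four-fold sum over $n,k,j,m$) and evaluates the generating series $M_{\kappa,t}$ by summing each index in turn via special-function identities --- a lemma expressing $(-1)^m P_k^{(m)}(\kappa^2)$ as a Cauchy integral over $\gamma_\kappa$ (this is where the contour enters), the Jacobi generating function $\sum_n P_n^{0,2j}(1-2w^2)z^n$, a Laguerre identity expressing $\sum_j L_{j-m-1}^{(m+1)}(2jt)(e^{-t}y)^j$ through $K_{2t}$, and a final geometric sum in $m$. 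Your route bypasses all of this by writing $b_k$ directly as a residue at $s=1$, summing the geometric series in $\tilde y$, and making the two substitutions $s\to\zeta\to w$. This is conceptually more economical and explains \emph{why} $K_{2t}$ appears (as the inverse of $\xi_{2t}$) and why the contour ends up at $\kappa$ (as the limit of the pole $w_\ast$ when $z\to 0$), facts that in the paper emerge only after the four summations.

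Two points in your sketch are not fully settled, though both are repairable. First, the claim that $C_0$ lifts to a \emph{single} $w$-loop enclosing $w=1$ and $w_\ast$ depends on the radius of $C_0$ relative to the branch point $\zeta=z$ of $w(\zeta)$; for fixed $C_1$ and $z\to 0$ that branch point can lie inside $C_0$, and then the preimage is not a simple closed curve. The clean fix is not to lift globally at all: since the $\zeta$-integrand has only the simple pole $\zeta_\ast$ inside $C_0$ and $\zeta=y(z,w)$ is biholomorphic near $w_\ast\leftrightarrow\zeta_\ast$, one gets directly $G(\tilde y)=-\operatorname{Res}_{\zeta_\ast}f=-\operatorname{Res}_{w_\ast}g=\tfrac{1}{2\pi i}\oint_{\gamma_\kappa}(-g)\,dw$, with no contour tracking needed. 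Second, your last sentence reads as if the two $w$-integrands differ by a constant factor; they do not --- yours carries the extra rational factor $\dfrac{2w^{2}(K_{2t}^{2}-\kappa^{2})}{\kappa(K_{2t}w+\kappa)(1-w^{2})}$, which equals $1$ only \emph{at} $w=w_\ast$. What you have actually (and correctly) shown is that the two integrands share the same residue at $w_\ast$, and since $\gamma_\kappa$ encloses no other singularity of either one, the two contour integrals coincide. Phrasing it that way closes the argument.
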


\begin{proof}
It consists of four steps corresponding to summation over the indices $\{k,n,j,m\}$ respectively. The interchange of the summation orders is readily justified by the (uniform) estimates given below for the generating series occurring in the proof. In the first step, we make use of the following lemma:  
\begin{lem}
For any $m \geq 0, k \geq 1$, 
\begin{equation}\label{ResInt}
(-1)^mP_k^{(m)}(\epsilon) = \frac{\kappa}{2i\pi}\int_{\gamma_{\kappa}} \frac{w^{m-1}(1-w^2)^{k}}{(w-\kappa)^{m+1}}dw
\end{equation}
where $\gamma_{\kappa}$ is a small circle centered at $w = \kappa$. 
\begin{proof}
If $m \geq 1$ then $(0)_m = 0$ whence
\begin{align*}
(-1)^mP_k^{(m)}(\epsilon) &= \frac{1}{m!}\sum_{l=1}^k\binom{k}{l}(-\epsilon)^l(2l)_m
\\& = \frac{1}{m!} \sum_{l=1}^{k}\binom{k}{l}(-1)^l \kappa^{2l}\frac{(2l+m-1)!}{(2l-1)!}
\\& = \frac{\kappa}{m!} \partial_{w}^{m}\left\{\sum_{l=1}^{k}\binom{k}{l}(-1)^l(w)^{2l+m-1}\right\}_{|w=\kappa}
\\& = \frac{\kappa}{m!} \partial_{w}^{m}[w^{m-1}(1-w^2)^{k}]_{|w=\kappa}.
\end{align*}
The lemma follows from the Cauchy integral formula and from $P_k^{(0)}(\epsilon) = (1-\epsilon)^k$. 
\end{proof}
\end{lem}
Now consider the sum over $k$: 
\begin{align*}
S(n,j) := \sum_{k = j}^n  (-1)^{k+n} \frac{2n}{n+k}\binom{n+k}{n-k} \binom{2k}{k-j} (1-w^2)^k, \quad 1 \leq j \leq n.
\end{align*}
Writing 
\begin{equation*}
\frac{2n}{n+k}\binom{n+k}{n-k} = \binom{n+k}{n-k} + \binom{n+k-1}{n-k-1}
\end{equation*}
then $S(n,j) = f(n,j) - f(n-1,j)$ where 
\begin{align*}
f(n,j) := (-1)^n\sum_{k = j}^n \binom{n+k}{n-k} \binom{2k}{k-j} (w^2-1)^k
\end{align*}
with the convention that an empty sum is zero. Since 
\begin{equation*}
\binom{n+k}{n-k} \binom{2k}{k-j} = \frac{(n+k)!}{(n-k)!(k+j)!(k-j)!},
\end{equation*}
then the index change $k \rightarrow n-k$ yields
\begin{align*}
f_1(n,j) &= \frac{(-1)^{n-j}(1-w^2)^j}{(n-j)!}\sum_{k = 0}^{n-j} \frac{(n-j)!}{(n-j-k)!}\frac{(n+k+j)!}{(k+2j)!} \frac{(w^2-1)^{k}}{k!}
\\& = \frac{(-1)^{n-j}(1-w^2)^j(n+j)!}{(n-j)!(2j)!}\sum_{k = 0}^{n-j} (j-n)_k\frac{(n+j+1)_k}{(2j+1)_k} \frac{(1-w^2)^{k}}{k!}
\\& = \frac{(-1)^{n-j}(1-w^2)^j(2j+1)_{n-j}}{(n-j)!}\sum_{k = 0}^{n-j} (j-n)_k\frac{(n+j+1)_k}{(2j+1)_k} \frac{(1-w^2)^{k}}{k!}
\\& = (-1)^{n-j} (1-w^2)^jP_{n-j}^{2j,0}(2w^2-1)
\end{align*}
where the last equality follows from \eqref{Jacobi}. Besides, the symmetry relation $P_n^{a,b}(z) = (-1)^nP_n^{b,a}(-z)$ entails
\begin{align*}
S(n,j) &= (1-w^2)^j\left\{(-1)^{n-j}P_{n-j}^{2j,0}(2w^2-1) - (-1)^{n-j-1}P_{n-1-j}^{2j,0}(2w^2-1)\right\}
\\& = (1-w^2)^j\left\{P_{n-j}^{0,2j}(1-2w^2) - P_{n-1-j}^{0,2j}(1-2w^2)\right\}.
\end{align*}
Combining \eqref{FS} and the previous Lemma, we get the following representation
\begin{align}\label{Repre}
S_n(\kappa,t) &= \frac{\kappa}{i\pi}\int_{\gamma_{\kappa}}\sum_{j=1}^n[(1-w^2)e^{-t}]^j \sum_{m= 0}^{j-1}L_{j-m-1}^{(m+1)}(2jt)
 \left\{P_{n-j}^{0,2j}(1-2w^2) - P_{n-1-j}^{0,2j}(1-2w^2)\right\}\frac{(-2)^mw^{m-1}}{(w-\kappa)^{m+1}}dw.
\end{align}

In the second step, we fix $j \geq m+1 \geq 1$  and consider the series of Jacobi polynomials: 
\begin{equation*}
\sum_{n \geq j}P_{n-j}^{0,2j}(1-2w^2)z^n = z^j \sum_{n \geq 0}P_n^{0,2j}(1-2w^2)z^n, \quad w \in \gamma_{\kappa}.
\end{equation*}
According to \cite{EKS} (see the discussion p.2), for any fixed $z \in \mathbb{D}$, this series converges uniformly in $w$ on closed subsets of the ellipse $E_{|z|}$ of foci $\{\pm1\}$ and semi-axes 
\begin{equation*}
\frac{1}{2}\left(\frac{1}{|z|} \pm |z|\right). 
\end{equation*}
Since both semi-axes stretches as $|z|$ becomes small, then given an ellipse $E_r, 0 < r < 1$,  the series of Jacobi polynomials above converges absolutely in the disc $\{|z| < r\}$ uniformly on the closure of the domain $D_r$ enclosed by $E_r$. Thus, we fix $r$ and choose $\gamma_{\kappa}$ such that its image under the map $w \mapsto 1-2w^2$ lies in $\overline{D_r}$. Doing so proves the analyticity of the series of Jacobi polynomials above in the variable $w \in E_r$ so that the following equality holds by analytic continuation (see \cite{Sze}, p.69 for real $1-2w^2$):
\begin{equation}\label{GenFunJac1}
z^j \sum_{n \geq 0}P_n^{0,2j}(1-2w^2)z^n = \frac{(4z)^j}{R(z,w)(1+z+R(z,w))^{2j}}, \quad w \in \gamma_{\kappa},
\end{equation}
provided 
\begin{equation*}
R(z,w) = \sqrt{(1-z)^2+4w^2z}
\end{equation*}
does not vanish and is analytic in the variable $z$. This last condition holds true at least for small $|z|$ since  
\begin{equation*}
|[(1-z)^2 +4w^2z] - 1| \leq |z|(|z| + \max_{\gamma_{\kappa}} |1-2w^2|).
\end{equation*}
Similarly, 
\begin{equation}\label{GenFunJac2}
\sum_{n \geq j}P_{n-j-1}^{0,2j}(1-2w^2)z^n = z \frac{(4z)^j}{R(z,w)(1+z+R(z,w))^{2j}}, \quad w \in \gamma_{\kappa}.
\end{equation}

Next comes the third step where we fix the curve $\gamma_{\kappa}$ and $m \geq 1$, and work out the series 
\begin{equation*}
\sum_{j \geq m+1}L_{j-m-1}^{(m+1)}(2jt) \frac{[4ze^{-t}(1-w^2)]^j}{(1+z+R(z,w))^{2j}}, \quad w \in \gamma_{\kappa}.
\end{equation*}
More precisely,
\begin{lem}
For any $m \geq 1$ and $|y| < 1$,
\begin{align}\label{Laguerre}
2^{m+1}\sum_{j \geq m+1}L_{j-m-1}^{(m+1)}(2jt)(e^{-t}y)^j = \frac{(K_{2t}(y))^2-1}{t(K_{2t}(y))^2 + (2-t)}\left[K_{2t}(y)-1\right]^{m}.
\end{align}
\end{lem}
\begin{proof}
The series in the LHS of \eqref{Laguerre} is an instance of equation 1.2 from \cite{Coh} where it is claimed without any detail that it converges in $\mathbb{D}$. This claim can be checked as follows: 
the derivation rule (\cite{Sze}) 
\begin{equation*}
L_{j-m-1}^{(m+1)}(2jt) = \frac{1}{(2j)^m}\partial_t^m L_{j-1}^{(1)}(2jt)
\end{equation*}
 together with the following integral representation (\cite{Gro-Mat}, p.561):
\begin{equation}\label{Int1}
L_{j-1}^{(1)}(2jt) = \frac{1}{2i\pi}\int_{C_0} \left(1+\frac{1}{h}\right)^{j}e^{-2jth} dh
\end{equation}
over a small closed curve $C_0$ around the origin lead to 
\begin{equation}\label{Int2}
L_{j-m-1}^{(m+1)}(2jt) = \frac{(-1)^m}{2i\pi}\int_{C_0} h^m \left(1+\frac{1}{h}\right)^{j}e^{-2jth} dh. 
\end{equation}
When $m=0$, the behavior of $L_{j-1}^{(1)}(2jt)$ as $j \rightarrow \infty$ was analyzed \cite{Gro-Mat} using the saddle point method (see p.561-562). For general $m \geq 1$, the integrand in \eqref{Int2} differs from the one in \eqref{Int1} by the factor $h^m$ which is everywhere analytic and is independent of $j$. According to the saddle point method, the behavior of $L_{j-m-1}^{(m+1)}(2jt)$ as $j \rightarrow \infty$ is the same as that of $L_{j-1}^{(1)}(2jt)$ up to multiplication by $m$-powers of the saddle points (one saddle point for $t \geq 2$ and two conjugate saddle points when $t < 2$). As a matter of fact, formulas (2.57) and (2.60) in \cite{Gro-Mat} show that the series displayed in \eqref{Laguerre} converges for $|y| < 1$. 

Coming into the derivation of the RHS of \eqref{Laguerre}, we specialize equation (1.2) in \cite{Coh} to $b=0, v=m+1, x=2(m+1)t, a = 1/(m+1)$ in order to get:
\begin{align*}
\sum_{j \geq 0} L_{j}^{(m+1)}(2jt+2(m+1)t) (e^{-t}y)^{j+m+1} & = \frac{1}{(1-u)^2+2tu}\frac{(ye^{-t})^{m+1}}{(1-u)^{m}}e^{2(m+1)tu/(u-1)}.
\end{align*}
Here, $u = u_t(z,w) \in \mathbb{D}$ is determined by
\begin{equation*}
e^{-t}y= ue^{2tu/(1-u)} \quad \Leftrightarrow \quad y = ue^{t(1+u)/(1-u)}.
\end{equation*}
Equivalently, 
\begin{equation*}
Z := \frac{u+1}{1-u} 
\end{equation*}
satisfies $\xi_{2t}(Z) = y, \Re(Z) \geq 0$. But since $\xi_{2t}$ is a one-to-one map from the Jordan domain 
\begin{equation*}
\Gamma_{2t} := \{\Re(Z) > 0, \xi_{2t}(Z) \in \mathbb{D}\}
\end{equation*}
onto $\mathbb{D}$ whose composition inverse is $K_{2t}$ (\cite{Biane1}, Lemma 12), then $Z = K_{2t}(y)$ and in turn
\begin{equation*}
u = \frac{Z-1}{Z+1}
\end{equation*}
is uniquely determined in the open unit disc. Substituting 
\begin{eqnarray*}
u & = & (ye^{-t})e^{2ut/(u-1)}  \\ 
\frac{1}{1-u} &=& \frac{K_{2t}(y)+1}{2}
\end{eqnarray*}
proves the lemma.
\end{proof}
According to this lemma,  
\begin{equation*}
2^{m+1}\sum_{j \geq m+1}L_{j-m-1}^{(m+1)}(2jt) \frac{[4ze^{-t}(1-w^2)]^j}{(1+z+R(z,w))^{2j}} = \frac{(K_{2t}(y))^2-1}{t(K_{2t}(y))^2 + (2-t)}\left[K_{2t}(y)-1\right]^m
\end{equation*}
provided that 
\begin{equation*}
y = y(z,w) = \frac{4z(1-w^2)}{(1+z+R(z,w))^2}
\end{equation*}
lies in $\mathbb{D}$, which holds true for $|z|$ small enough. 

Finally, $K_{2t}(y) - 1$ becomes small enough when $|z|$ does since $K_{2t}(0) = 1$. As a result
\begin{equation}\label{Geom}
\sum_{m \geq 0} \frac{w^{m-1}}{(w-\kappa)^{m+1}}\left[1-K_{2t}(y)\right]^m = \frac{1}{w(wK_{2t}(y) - \kappa)}
\end{equation}
in some disc centered at $z=0$. Gathering \eqref{Repre}, \eqref{GenFunJac1}, \eqref{GenFunJac2} and \eqref{Geom}, the proposition is proved. 
\end{proof}
\begin{remark}
If $0 < t \leq 2$ then the map 
\begin{equation*}
z \mapsto t(K_{2t}(y))^2 + (2-t)
\end{equation*}
does not vanish since $K_{2t}$, as a map from the closed unit disc into the right half-plane, takes values in $\sqrt{-1}\mathbb{R}$ only on the unit circle. Otherwise, the range $\Gamma_{2t}$ of $K_{2t}$ does not contain the real $\sqrt{(t-2)/t}$ (see \cite{Biane1}, section 4.2.3). Thus, for any $t > 0$ and any $z$ in the disc evoked in the previous proposition, the map
\begin{equation*}
w \mapsto \frac{1}{[t(K_{2t}(y))^2 + (2-t)]}
\end{equation*}
is holomorphic in the interior of $\gamma_{\kappa}$. This observation together with the splitting $\kappa = (\kappa - wK_{2t}(y)) + wK_{2t}(y)$ lead to
\end{remark}

\begin{cor}
With $\gamma_{\kappa}$ and $z$ as in the previous proposition, 
\begin{align*}
M_{\kappa,t}(z) = (1-z) \frac{1}{2i\pi}\int_{\gamma_{\kappa}} \frac{K_{2t}(y)[(K_{2t}(y))^2-1]}{[t(K_{2t}(y))^2 + (2-t)][wK_{2t}(y) - \kappa]}\frac{dw}{R(z,w)}
\end{align*}
\end{cor}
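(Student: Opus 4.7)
The plan is to start from the integral representation of the previous proposition and exploit the algebraic decomposition hinted at in the preceding remark. Writing $\kappa = (\kappa - wK_{2t}(y)) + wK_{2t}(y)$ yields the partial fraction identity
\begin{equation*}
\frac{\kappa}{w(wK_{2t}(y) - \kappa)} = -\frac{1}{w} + \frac{K_{2t}(y)}{wK_{2t}(y) - \kappa},
\end{equation*}
so that the integrand of the proposition splits into two pieces. The second piece reproduces exactly the integrand displayed in the statement of the corollary, hence the entire task reduces to showing that the contribution of the first piece, namely
\begin{equation*}
I_0(z) := -(1-z)\frac{1}{2i\pi}\int_{\gamma_{\kappa}} \frac{(K_{2t}(y))^2 - 1}{t(K_{2t}(y))^2 + (2-t)}\, \frac{dw}{wR(z,w)},
\end{equation*}
vanishes identically for $z$ in the neighborhood of the origin considered in the proposition.

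To establish $I_0(z)=0$, I would invoke Cauchy's theorem after verifying that the integrand above is holomorphic in the closed disc bounded by $\gamma_{\kappa}$. Since $\gamma_{\kappa}$ is a small circle centered at $w=\kappa\neq 0$, the factor $1/w$ introduces no singularity in the interior provided its radius is chosen smaller than $|\kappa|$. The function $R(z,w) = \sqrt{(1-z)^2+4w^2z}$ is analytic in $w$ and uniformly bounded away from zero on the closed disc enclosed by $\gamma_{\kappa}$ for $|z|$ small enough, as already observed in the proof of the proposition. Consequently the map $w \mapsto y(z,w)$ is holomorphic in that disc and, still for $|z|$ small, $y$ remains inside $\mathbb{D}$ so that $K_{2t}(y)$ is analytic in $w$. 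Finally, the remark preceding the corollary guarantees that $t(K_{2t}(y))^2 + (2-t)$ does not vanish in the interior of $\gamma_{\kappa}$: either because $K_{2t}$ avoids the imaginary axis in the open disc when $0 < t \leq 2$, or because the range $\Gamma_{2t}$ misses the real point $\sqrt{(t-2)/t}$ when $t > 2$. Taken together, these conditions imply that the integrand of $I_0(z)$ extends analytically to the closed disc bounded by $\gamma_{\kappa}$, and Cauchy's theorem then gives $I_0(z)=0$.

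Combining the two pieces then delivers the formula stated in the corollary. The main ``obstacle'' is in fact minor: it is only the simultaneous compatibility of the various smallness requirements on $|z|$ and on the radius of $\gamma_{\kappa}$, together with the non-vanishing of $t(K_{2t}(y))^2 + (2-t)$ inside $\gamma_{\kappa}$. All of these constraints are already packaged into the preceding proposition and remark, so no genuinely new analytic input is needed. The corollary is thus essentially a cosmetic rearrangement that isolates the additional factor $K_{2t}(y)$ in the numerator, which will be more convenient in the subsequent discussion of the analytic extension of $M_{\kappa,t}$ in $\mathbb{D}$.
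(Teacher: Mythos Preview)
Your proof is correct and follows exactly the route indicated in the paper: the remark preceding the corollary already records the holomorphy of $w\mapsto [t(K_{2t}(y))^2+(2-t)]^{-1}$ inside $\gamma_{\kappa}$ and the splitting $\kappa=(\kappa-wK_{2t}(y))+wK_{2t}(y)$, and the corollary is then immediate by Cauchy's theorem. You have simply spelled out the partial-fraction identity and the remaining analyticity checks (for $1/w$, $R(z,w)$ and $K_{2t}(y)$), which are the same ingredients already used in the proof of the proposition.
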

The second integral representation has the merit to get rid of the singularity of the integrand at $w=0$. It also reduces to $z\partial_z K_{2t}(z) = z\partial_z \Phi_{0,t}^{-1}(z)$ when $\kappa = 0$. In fact, 
\begin{align*}
M_{0,t}(z) &= (1-z) \frac{1}{2i\pi}\int_{\gamma_0} \frac{K_{2t}(y))^2-1}{[t(K_{2t}(y))^2 + (2-t)]}\frac{dw}{w R(z,w)}
\\& = \frac{(1-z)(K_{2t}(y(z,0))^2-1)}{[t(K_{2t}(y(z,0)))^2 + (2-t)]R(z,0)}.
\end{align*}
But $R(z,0) = 1-z$ and $y(z,0)= z$ hence
\begin{align*}
M_{0,t}(z) = \frac{(K_{2t}(z))^2-1)}{[t(K_{2t}(z))^2 + (2-t)]}
\end{align*}
which is the special instance $m=0$ of the RHS of \eqref{Laguerre}: 
\begin{equation*}
\sum_{j \geq 1}L_{j-1}^{(1)}(2jt)z^j = z\partial_z K_{2t}(z).
\end{equation*}

\section{further developments}
So far, the integral representation derived for $M_{\kappa,t}$ is valid in a neighborhood of the origin. With an extra effort, we can prove that the only obstruction toward the analytic extension of $M_{\kappa,t}$ in $\mathbb{D}$ comes from the set of zeros  
\begin{equation*}
wK_{2t}(y(z,w)) - \kappa, \quad \kappa \neq 0,
\end{equation*}
which may or not intersect $\gamma_{\kappa}$ as $z$ varies in $\mathbb{D}$. More precisely, $R(\cdot, w)$ and $y(\cdot, w)$ may be extended to $\mathbb{D}$ after suitably deforming the circle $\gamma_{\kappa}$. For the former, note that the polynomial 
\begin{equation*}
z \mapsto (1-z)^2+ 4\kappa^2z
\end{equation*}
vanishes only on the unit circle and never takes a negative value. Therefore, the range of its restriction to any closed disc in $\mathbb{D}$ remains at a distance $\delta > 0$ from the half-line $]-\infty, 0]$. For those $z$, if 
$\gamma_{\kappa}$ is chosen such that $w^2$ lies in the open disc centered at $\kappa^2$ at a distance $\delta_1 < \delta/4$ then the decomposition
\begin{equation*}
(1-z)^2+4w^2z = [(1-z)^2+4\kappa^2z] + 4\delta_1e^{i\theta}z
\end{equation*}
shows that $(1-z)^2+4w^2z$ does not take values in $]-\infty,0]$. As to the latter, we use a similar reasoning. More precisely, we readily see from
\begin{equation*}
y(z,w) = \frac{4z(1-w^2)}{(1+z+R(z,w))^2} = \frac{1+z - R(z,w)}{1+z + R(z,w)},
\end{equation*}
that $|y(z,w)| < 1$ if and only if 
\begin{equation*}
\Re[(1+z) \overline{R(z,w)}] > 0,
\end{equation*}
that is the inner product of the vectors $(1+z)$ and $R(z,w)$ is positive. But 
\begin{equation*}
R(z, w) = \sqrt{(1+z)^2 - 4(1-w^2)z},
\end{equation*}
and $2\arg(1+z) < \arg(z), |z| < 1$ show that if $w = \kappa \in (-1,1)\setminus \{0\}$ then 
\begin{equation*}
\min\{\arg(1-z), \arg(1+z)\} < \arg R(z,\kappa) < \max\{\arg(1-z), \arg(1+z)\}. 
\end{equation*}
Since $\Re[(1+z)(1-\overline{z})] = 1-|z|^2 > 0$ then $\Re[(1+z) \overline{R(z,\kappa)}] > 0$ and still holds on a small curve around $\kappa$. 


\end{document}